\newcommand{\op}[1]{\prescript{o}{}{#1}}
\newcommand{\pp}[1]{\prescript{p}{}{#1}}
\newcommand{\one}{\mathbbm 1}
\renewcommand\vec[1]{\overset{{}_{\shortrightarrow}}{#1}}
\newcommand\cev[1]{\overset{{}_{\shortleftarrow}}{#1}}
\def\reals{\mathbb{R}}
\def\comp{\raise 1pt \hbox{$\scriptstyle\circ$}}
\def\maximize{\mathop{\rm maximize}\limits}
\def\esssup{\mathop{\rm ess\ sup}\nolimits}
\def\upto{{\raise 1pt \hbox{$\scriptstyle \,\nearrow\,$}}}
\def\downto{{\raise 1pt \hbox{$\scriptstyle \,\searrow\,$}}}
\def\FF{(\F_t)_{t\ge 0}}
\def\ovr{\mathop{\rm over}}
\def\C{{\cal C}}
\def\D{{\cal D}}
\def\F{{\cal F}}
\def\J{{\cal J}}
\def\M{{\cal M}}
\def\N{{\cal N}}
\def\R{{\cal R}}
\def\T{{\cal T}}
\newtheorem{theorem}{Theorem}
\newtheorem{lemma}[theorem]{Lemma}
\theoremstyle{definition}
\theoremstyle{empty}
\begin{document}
\title{Optimal stopping without Snell envelopes}

\author{Teemu Pennanen \and Ari-Pekka Perkki\"o}

\maketitle

\begin{abstract}
This paper proves the existence of optimal stopping times via elementary functional analytic arguments. The problem is first relaxed into a convex optimization problem over a closed convex subset of the unit ball of the dual of a Banach space. The existence of optimal solutions then follows from the Banach--Alaoglu compactness theorem and the Krein--Millman theorem on extreme points of convex sets. This approach seems to give the most general existence results known to date. Applying convex duality to the relaxed problem gives a dual problem and optimality conditions in terms of martingales that dominate the reward process.
\end{abstract}

\noindent\textbf{Keywords.} optimal stopping, Banach spaces, duality
\newline
\newline
\noindent\textbf{AMS subject classification codes.} 46N30, 60G40, 49N15

\section{Introduction}

Given a complete filtered probability space $(\Omega,\F,\FF,P)$ satisfying the usual hypotheses, let $R$ be an optional process of class $(D)$, and consider the optimal stopping problem
\begin{equation}\label{os}\tag{OS}
\maximize\quad ER_\tau\quad\ovr\quad \tau\in\T,
\end{equation}
where $\T$ is the set of stopping times with values in $[0,T]\cup\{T+\}$ and $R$ is defined to be zero on $T+$. We allow $T$ to be $\infty$ in which case $[0,T]$ is interpreted as the one-point compactification of the positive reals.

Without further conditions, optimal stopping times need not exist (take any deterministic process $R$ whose supremum is not attained). 
Theorem~II.2 of Bismut and Skalli~\cite{bs77} establishes the existence for bounded reward processes $R$ such that $R\ge\cev R$ and $\vec R\le\pp R$. Here,
\[
\vec R_t:=\limsup_{s\upto t} R_s\quad\text{and}\quad\cev R_t:=\limsup_{s\downto t} R_s,
\]
the {\em left-} and {\em right-upper semicontinuous regularizations} of $R$, respectively. Bismut and Skalli mention on page 301 that, instead of boundedness, it would suffice to assume that $R$ is of class $(D)$. 

In order to extend the above, we study the ``optimal quasi-stopping problem''
\begin{equation}\label{oqs}\tag{OQS}
\maximize\quad E[R_\tau+\vec R_{\tilde \tau}]\quad\ovr\quad (\tau,\tilde\tau)\in\hat\T,
\end{equation}
where $\hat\T$ is the set of {\em quasi-stopping times} (``split stopping time'' in Dellacherie and Meyer~\cite{dm82}) defined by
\[
\hat\T:=\{(\tau,\tilde\tau)\in\T\times\T_p \mid \tilde\tau>0,\ \tau\vee\tilde\tau =T+ \},
\]
where $\T_p$ is the set of predictable times. When $R$ is cadlag, $\vec R=R_-$, and our formulation of the quasi-optimal stopping coincides with that of Bismut~\cite{bis79}. Our main result gives the existence of optimal quasi-stopping times when $R\ge\cev R$. When $R \ge \cev R$ and $\vec R\le \pp R$, we obtain the existence for \eqref{os} thus extending the existence result of \cite[Theorem~II.2]{bs77} to possibly unbounded processes $R$ as suggested already on page 301 of \cite{bs77}. 

Our existence proofs are based on functional analytical arguments that avoid the use of Snell envelopes which are used in most analyses of optimal stopping. Our strategy is to first look at a convex relaxation of the problem. This turns out be a linear optimization problem over a compact convex set of random measures whose extremal points can be identified with (quasi-)stopping times. As soon as the objective is upper semicontinuous on this set, Krein-Milman theorem gives the existence of (quasi-)stopping times. Sufficient conditions for upper semicontinuity are obtained as a simple application of the main result of Perkki\"o and Trevino~\cite{pt18a}. The overall approach was suggested already on page 287 of Bismut~\cite{bis79b} in the case of optimal stopping. We extended the strategy (and provide explicit derivations) to quasi-optimal stopping for a merely right-upper semicontinuous reward process.

The last section of the paper develops a dual problem and optimality conditions for optimal (quasi-)stopping problems. The dual variables turn out to be martingales that dominate $R$. As a simple consequence, we obtain the duality result of Davis and Karatzas~\cite{dk94} in a more general setting where the reward process $R$ is merely of class $(D)$.

\section{Regular processes}\label{sec:regular}

In this section, the reward process $R$ is assumed to be {\em regular}, i.e.\ of class $(D)$ such that the left-continuous version $R_-$ and the predictable projection $\pp R$ of $R$ are indistinguishable; see e.g.\ \cite{bis78} or \cite[Remark~50.d]{dm82}. Our analysis will be based on the fact that the space of regular processes is a Banach space whose dual can be identified with optional measures of essentially bounded variation; see Theorem~\ref{thm:reg1} below.


The space $M$ of Radon measures may be identified with the space $X_0$ of left-continuous functions of bounded variation on $\reals_+$ which are constant on $(T,\infty]$ and $x_0=0$. Indeed, for every $x\in X_0$, there exists a unique $Dx\in M$ such that $x_t=Dx([0,t))$ for all $t\in\reals$. Thus $x\mapsto Dx$ defines a linear isomorphism between $X_0$ and $M$. The value of $x$ for $t>T$ will be denoted by $x_{T+}$. Similarly, the space $\M^\infty$ of optional random measures with essentially bounded total variation may be identified with the space $\N_0^\infty$ of adapted processes $x$ with $x\in X_0$ almost surely and $Dx\in\M^\infty$.

Let $C$ the space of continuous functions on $[0,T]$ equipped with the supremum norm and let $L^1(C)$ be the space of (not necessarily adapted) continuous processes $y$ with $E\|y\|<\infty$. The norm $E\|y\|$ makes $L^1(C)$ into a Banach space whose dual can be identified with the space $L^\infty(M)$ of random measures whose pathwise total variation is essentially bounded. The following result is essentially from \cite{bis78}; see \cite[Theorem~8]{pp18c} or \cite[Corollary~16]{pp18b}. It provides the functional analytic setting for analyzing optimal stopping with regular processes.

\begin{theorem}\label{thm:reg1}
The space $\R^1$ of regular processes equipped with the norm
\[
\|y\|_{\R^1}:=\sup_{\tau\in\T}E|y_\tau|
\]
is Banach and its dual can be identified with $\M^\infty$ through the bilinear form
\[
\langle y,u\rangle =E\int ydu.
\]
The optional projection is a continuous surjection of $L^1(C)$ to $\R^1$ and its adjoint is the embedding of $\M^\infty$ to $L^\infty(M)$. The norm of $\R^1$ is equivalent to
\[
p(y):= \inf_{z\in L^1(C)} \{E\|z\| \mid \op z = y\}
\]
which has the dual representation
\[
p(y)=\sup\{\langle y,u\rangle\,|\,\esssup(\|u\|)\le 1\}.
\]
\end{theorem}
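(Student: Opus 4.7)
The strategy is to realize $\R^1$ as a quotient of the Banach space $L^1(C)$ via the optional projection and to read off its dual as the annihilator of the kernel inside $L^\infty(M) = L^1(C)^*$. To set this up I would first check that $\op{\cdot}: L^1(C) \to \R^1$ is well-defined, continuous, and surjective. For $z \in L^1(C)$, path continuity gives $\op(z_-) = \op z$, and a direct pathwise-jump comparison shows $\pp(\op z) = (\op z)_-$ up to indistinguishability, so $\op z$ is regular; the conditional-expectation characterization $(\op z)_\tau = E[z_\tau \mid \F_\tau]$ immediately yields $\|\op z\|_{\R^1} \le E\|z\|$. Surjectivity is a genuine input: given a regular $y$ of class $(D)$, one constructs a continuous $z \in L^1(C)$ with $\op z = y$ by interpolating the values of $y$ along an exhaustive countable family of stopping times, with the $L^1$-bound supplied by the uniform integrability of $y$, following \cite{bis78, pp18c, pp18b}.

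Let $K := \ker(\op{\cdot})$, closed by the previous step. The quotient $L^1(C)/K$ is Banach with quotient norm $p$, and the induced linear bijection onto $\R^1$ transfers this structure, so $(\R^1, p)$ is Banach. Standard Banach duality identifies its dual with the annihilator $K^\perp \subset L^\infty(M)$ under the pairing $\langle \op z, u\rangle = E\int z\, du$, and the adjoint of $\op{\cdot}$ is the inclusion $K^\perp \hookrightarrow L^\infty(M)$; the Hahn--Banach formula for the quotient norm delivers the dual representation $p(y) = \sup\{\langle y, u\rangle \mid \esssup \|u\| \le 1\}$. The identification $K^\perp = \M^\infty$ is now the technical core. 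The inclusion $\M^\infty \subseteq K^\perp$ is the defining property of optionality, since for optional $u$ one has $E\int z\, du = E\int \op z\, du$, which vanishes on $K$. For the converse, given $u \in K^\perp$, decompose $u = u^o + (u - u^o)$, where $u^o \in \M^\infty$ is the dual optional projection of $u$; then $u - u^o$ still lies in $K^\perp$ and integrates every continuous process to zero by construction of the dual optional projection, which via a section-theorem argument forces $u = u^o \in \M^\infty$.

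Finally, the equivalence of $p$ and $\|\cdot\|_{\R^1}$ is formal: taking the infimum in $\|\op z\|_{\R^1} \le E\|z\|$ gives $\|y\|_{\R^1} \le p(y)$, while the reverse inequality follows by verifying separately that $(\R^1, \|\cdot\|_{\R^1})$ is complete --- a standard class-$(D)$ argument based on uniform integrability and a.s.\ passage to the limit along stopping times --- and then invoking the open mapping theorem on the identity map between the two Banach topologies. The main obstacle is the annihilator identification $K^\perp = \M^\infty$, where one must disentangle the optional and non-optional parts of an $L^\infty(M)$-measure and prove that the non-optional part must vanish once tested against the full space of continuous processes. The surjectivity of $\op{\cdot}$ is the other non-trivial ingredient and is where the regularity hypothesis on the target $y$ is genuinely used.
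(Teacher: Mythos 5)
The paper does not prove this theorem at all; it is imported wholesale from \cite{bis78}, \cite[Theorem~8]{pp18c} and \cite[Corollary~16]{pp18b}, so there is no in-paper argument to compare against. Your quotient-space architecture (realize $\R^1$ as $L^1(C)/\ker\op{}$, identify the dual with the annihilator $K^\perp$, get the dual representation of $p$ from the Hahn--Banach description of the quotient norm, and get norm equivalence from completeness plus the open mapping theorem) is the right skeleton and is consistent with how the cited references proceed. But two of the steps you flag as the ``technical core'' are not actually closed by what you write.

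The serious gap is the inclusion $K^\perp\subseteq\M^\infty$. You claim that for $u\in K^\perp$ the difference $u-u^o$ ``integrates every continuous process to zero by construction of the dual optional projection.'' Unwind this: by construction of $u^o$ one has $E\int z\,du^o=E\int \op{z}\,du$, so
\[
E\int z\,d(u-u^o)=E\int z\,du-E\int \op{z}\,du ,
\]
and the vanishing of the right-hand side is precisely the statement that $u$ acts optionally on $z$ --- the very thing you are trying to prove. The hypothesis $u\in K^\perp$ only tells you that $z\mapsto E\int z\,du$ depends on $z$ through $\op{z}$; you cannot substitute $w=\op{z}$ into that functional because $\op{z}$ is merely regular, not continuous, so $z-\op{z}\notin L^1(C)$ and membership in $K^\perp$ says nothing about it. Nor does it suffice to observe that $u-u^o$ annihilates both $K$ and the adapted continuous processes: $K+\{\text{adapted continuous}\}$ is not dense in $L^1(C)$ in general (e.g.\ with a filtration that is trivial before time $1$, the optional measure $\xi\delta_1$ with $E\xi=0$ annihilates every adapted continuous process), so a section-theorem appeal at this point has nothing to bite on. Closing this step requires a genuine approximation or representation argument for functionals on $\R^1$ (this is where the work in \cite{pp18b} actually lies), and your sketch replaces it with a circular assertion. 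A secondary, unaddressed point in the same step: you need $u^o\in\M^\infty$, i.e.\ that the dual optional projection of a measure with \emph{essentially bounded} total variation again has essentially bounded total variation; only the expectation of the total variation is preserved for free.

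The other substantive input, surjectivity of $\op{}:L^1(C)\to\R^1$, you correctly identify as where regularity of $y$ enters, but ``interpolating along an exhaustive countable family of stopping times'' is a pointer to Bismut's theorem rather than a proof of it. Since the paper itself outsources the entire statement, deferring this one piece to \cite{bis78} is defensible --- but then the proposal proves strictly less than it claims, and the one step you do argue in detail is the one that does not go through as written.
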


We first write the optimal stopping problem as
\[
\maximize\quad \langle R, Dx\rangle\quad\ovr\quad x\in\C_e,
\]
where 
\[
\C_e:=\{x\in\N_0^\infty\,|\, Dx\in\M^\infty_+,\ x_t\in\{0,1\}\}.
\]
The equation $\tau(\omega) = \inf\{t\in\reals\mid x_t(\omega)\ge 1\}$ gives a one-to-one correspondence between the elements of $\T$ and $\C_e$. Consider also the convex relaxation
\begin{equation}\label{Rcr}\tag{ROS}
\maximize\quad \langle R,Dx\rangle\quad\ovr\quad x\in\C,
\end{equation}
where
\[
\C:=\{x\in\N_0^\infty\,|\, Dx\in\M^\infty_+,\ x_{T+}\le 1\}.
\]
Clearly, $\C_e\subset\C$ so the optimum value of optimal stopping is dominated by the optimum value of the relaxation. The elements of $\C$ are {\em randomized stopping times} in the sense of Baxter and Chacon~\cite[Section~2]{bc77}. 

Recall that $x\in\C$ is an {\em extreme point} of $\C$ if it cannot be expressed as a convex combination of two points of $\C$ different from $x$.

\begin{lemma}\label{lem:km}
The set $\C$ is convex, $\sigma(\N_0^\infty,\R^1)$-compact and $\C_e$ is the set of its extreme points.
\end{lemma}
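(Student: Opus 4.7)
Convexity is immediate from linearity of $x \mapsto Dx$ and the convexity of the defining constraints. For weak-$*$ compactness I would first note that for $x \in \C$ the positivity of $Dx$ forces the pathwise identity $\|Dx(\omega)\|_{\mathrm{TV}} = Dx(\omega)([0,\infty)) = x_{T+}(\omega) \le 1$, so, by Theorem~\ref{thm:reg1} and the norm equivalence stated there, $\C$ lies inside a weak-$*$ compact closed ball of $(\R^1)^*$. Banach--Alaoglu then reduces the task to showing that $\C$ itself is weak-$*$ closed. The bound $\esssup\|Dx\| \le 1$ is preserved under weak-$*$ limits via the dual representation of $p$, and the positivity $Dx \ge 0$ is preserved because it is an intersection of the weak-$*$ closed half-spaces $\{\langle y, x\rangle \ge 0\}$ as $y$ ranges over the nonnegative elements of $\R^1$; the latter cone is rich enough to test $Dx \ge 0$ since every nonnegative continuous process optionally projects to a nonnegative regular process and the adjoint relation in Theorem~\ref{thm:reg1} identifies this pairing with the standard $L^1(C)/L^\infty(M)$ one.

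The inclusion $\C_e \subseteq \ext\C$ is direct: every element of $\C$ is a $[0,1]$-valued increasing process, so the constraint $x_t(\omega) \in \{0,1\}$ combined with $x = (y+z)/2$ and $y, z \in \C$ forces $y_t(\omega) = z_t(\omega) = x_t(\omega)$ at every $(t,\omega)$.

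For the reverse inclusion I would use an explicit adapted truncation. Given $x \in \C \setminus \C_e$, set
\[
y_t := \min(2 x_t, 1), \qquad z_t := (2 x_t - 1)^+.
\]
Both processes are deterministic measurable functions of $x_t$ alone, hence adapted, a.s.\ left-continuous, increasing, $[0,1]$-valued, and constant on $(T,\infty]$, so $y, z \in \C$. A short case analysis yields $x = (y+z)/2$ and $y_t - z_t = 2\min(x_t, 1 - x_t)$, which vanishes precisely when $x_t \in \{0,1\}$; since $x \notin \C_e$ by assumption, this identity fails on a set of positive probability, so $y \ne z$ in $\N_0^\infty$ and $x$ is not extreme. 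The subtlest point I anticipate is the weak-$*$ closedness of the positivity cone $\{Dx \ge 0\}$, which I would handle via the adjoint relation stated in Theorem~\ref{thm:reg1}; once the truncation splitting above is written down, the identification of extreme points is essentially immediate.
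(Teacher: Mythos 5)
Your proof is correct and follows essentially the same route as the paper: Banach--Alaoglu applied to $\C$ as a weak-$*$ closed subset of the unit ball of $(\R^1)^*$, and the truncation splitting $x=\tfrac12\bigl(\min(2x,1)+(2x-1)^+\bigr)$, which is exactly the paper's decomposition $x=\bar s x^1+(1-\bar s)x^2$ specialized to $\bar s=\tfrac12$. The only difference is that you spell out the weak-$*$ closedness of the positivity cone and of the ball, which the paper leaves implicit; your argument for these points via the adjoint relation in Theorem~\ref{thm:reg1} is sound.
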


\begin{proof}
The set $\C$ is a closed convex set of the unit ball that $\N_0^\infty$ has as the dual of the Banach space $\R^1$. The compactness thus follows from Banach-Alaoglu. It is easily shown that the elements of $\C_e$ are extreme points of $\C$. On the other hand, if $x\notin\C_e$ there exists an $\bar s\in(0,1)$ such that the processes
\[
x^1_t:=\frac{1}{\bar s}[x_t\wedge\bar s]\quad\text{and}\quad x^2_t:=\frac{1}{1-\bar s}[(x_t-\bar s)\vee 0]
\]
are different elements of $\C$. Since $x=\bar s x^1+(1-\bar s)x^2$, it is not an extreme point of $\C$.
\end{proof}

Since the function $x\mapsto\langle R,Dx\rangle$ is continuous, the compactness of $\C$ in Lemma~\ref{lem:km} implies that the maximum in \eqref{Rcr} is attained. The fact that the maximum is attained at a genuine stopping time follows from the characterization of the extreme points in Lemma~\ref{lem:km} and the following variant of the Krein-Millman theorem; see e.g.~\cite[Theorem~25.9]{cho69}. 

\begin{theorem}[Bauer's maximum principle]\label{thm:bauer}
In a locally convex Hausdorff topological vector space, an upper semicontinuous (usc) convex function on a compact convex set $K$ attains its maximum at an extremal point of $K$.
\end{theorem}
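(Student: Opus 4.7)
The plan is to pass to a minimal closed ``extremal'' subset of $K$ via Zorn's lemma and then collapse it to a single point by a separation argument.

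I would first set $F := \{x \in K : f(x) \ge \sup_K f\}$. Upper semicontinuity of $f$ and compactness of $K$ make $F$ a nonempty compact subset on which $f$ achieves its maximum. Convexity of $f$ then gives the extremality of $F$ in $K$: if $\lambda x + (1-\lambda) y \in F$ with $x, y \in K$ and $\lambda \in (0,1)$, then
\[
\sup_K f = f(\lambda x + (1-\lambda)y) \le \lambda f(x) + (1-\lambda) f(y) \le \sup_K f,
\]
forcing $f(x) = f(y) = \sup_K f$, so $x, y \in F$.

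Next I would apply Zorn to the family $\mathcal{A}$ of nonempty closed subsets of $F$ that are extremal in $K$ in the above sense, ordered by reverse inclusion. For any chain in $\mathcal{A}$, the intersection is closed, is nonempty by the finite intersection property on the compact set $F$, and inherits the extremality property directly from the definition. This yields a minimal element $A_0 \in \mathcal{A}$.

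The last step is to show $A_0$ is a singleton. If it contained distinct points $x_0 \neq x_1$, local convexity and the Hausdorff axiom would provide a continuous linear functional $\phi$ with $\phi(x_0) \neq \phi(x_1)$. Setting $c := \max_{A_0} \phi$ (attained by compactness and continuity) and $A' := \{x \in A_0 : \phi(x) = c\}$ gives a nonempty, closed, proper subset of $A_0$. Repeating the two-line inequality argument above with the linear function $\phi$ in place of $f$ shows $A'$ is still extremal in $K$, contradicting the minimality of $A_0$. Hence $A_0 = \{e\}$, and by extremality $e$ is an extreme point of $K$ with $f(e) = \sup_K f$.

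The main technical verifications are that extremality in $K$ is preserved under intersections of chains in $\mathcal{A}$ and under passage to a level set of a continuous linear functional; both reduce to short inequality chases, and the remainder of the argument uses only standard compactness and Hahn--Banach separation.
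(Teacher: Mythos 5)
The paper does not prove this statement at all; it is quoted as a known result with a citation to Choquet's book, so there is no ``paper proof'' to compare against. Your argument is the standard self-contained proof of Bauer's maximum principle and it is essentially correct: the superlevel set $F=\{x\in K: f(x)\ge\sup_K f\}$ is nonempty and closed by upper semicontinuity and compactness, convexity of $f$ makes it an extremal subset, Zorn's lemma applied to nonempty closed extremal subsets ordered by reverse inclusion produces a minimal one (the intersection of a chain is nonempty by the finite intersection property and is again extremal), and Hahn--Banach in a locally convex Hausdorff space forces the minimal set to be a singleton, hence an extreme point of $K$ lying in $F$. One small point deserves to be made explicit in the last step: when you verify that $A'=\{x\in A_0:\phi(x)=c\}$ is extremal \emph{in $K$}, you cannot literally repeat the two-line inequality with arbitrary $x,y\in K$, because $c=\max_{A_0}\phi$ and the bound $\phi(x)\le c$ is not available on all of $K$. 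You must first invoke the extremality of $A_0$ in $K$ to conclude $x,y\in A_0$, and only then run the inequality with $\phi$ to get $x,y\in A'$. With that one sentence inserted, the proof is complete.
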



Combining Lemma~\ref{lem:km} and Theorem~\ref{thm:bauer} gives the following.

\begin{theorem}\label{thm:os}
Optimal stopping time in \eqref{os} exists for every $R\in\R^1$.
\end{theorem}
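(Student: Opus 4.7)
The plan is to apply Bauer's maximum principle (Theorem~\ref{thm:bauer}) to the relaxed problem \eqref{Rcr} on the compact convex set $\C$, and then exploit the identification of the extreme points of $\C$ with genuine stopping times established in Lemma~\ref{lem:km}.

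First I would note that the objective $x\mapsto\langle R,Dx\rangle$ is, by Theorem~\ref{thm:reg1}, exactly the evaluation of $x\in\N_0^\infty=(\R^1)^*$ at the element $R\in\R^1$. This map is therefore linear and continuous in the weak$^*$ topology $\sigma(\N_0^\infty,\R^1)$; in particular it is convex and upper semicontinuous on $\C$. Lemma~\ref{lem:km} gives that $\C$ is a convex and $\sigma(\N_0^\infty,\R^1)$-compact subset of the locally convex Hausdorff space $\N_0^\infty$. Bauer's maximum principle therefore applies and yields an $\bar x\in\C$ attaining the maximum of the relaxed problem \eqref{Rcr}, with $\bar x$ an extreme point of $\C$.

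By the extreme point characterization in Lemma~\ref{lem:km}, $\bar x\in\C_e$, so $\bar x$ takes values in $\{0,1\}$ and the formula $\bar\tau(\omega):=\inf\{t\in\reals\mid\bar x_t(\omega)\ge 1\}$ defines a stopping time $\bar\tau\in\T$ (with the convention $\bar\tau=T+$ when $\bar x\equiv 0$), under which $D\bar x$ is the Dirac mass at $\bar\tau$ and hence $\langle R,D\bar x\rangle=ER_{\bar\tau}$. Since $\C_e\subset\C$, the chain of inequalities
\[
\sup_{\tau\in\T}ER_\tau=\sup_{x\in\C_e}\langle R,Dx\rangle\le\sup_{x\in\C}\langle R,Dx\rangle=\langle R,D\bar x\rangle=ER_{\bar\tau}
\]
collapses, so $\bar\tau$ solves \eqref{os}.

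The only nontrivial step is verifying weak$^*$ continuity of the objective, but this is immediate from Theorem~\ref{thm:reg1} once we recognize the pairing $\langle R,Dx\rangle$ as the canonical duality between $\R^1$ and $\M^\infty\cong\N_0^\infty$; everything else is a bookkeeping application of the compactness/extreme-point information already packaged in Lemma~\ref{lem:km} and Theorem~\ref{thm:bauer}.
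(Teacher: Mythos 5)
Your proof is correct and follows exactly the paper's route: weak$^*$ continuity of the linear objective $x\mapsto\langle R,Dx\rangle$ from the duality in Theorem~\ref{thm:reg1}, compactness and extreme-point identification from Lemma~\ref{lem:km}, and Bauer's maximum principle to land on an extreme point of $\C$, i.e.\ a genuine stopping time. You merely spell out the bookkeeping (the collapse of the inequality chain between \eqref{os} and \eqref{Rcr}) that the paper leaves implicit.
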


The above seems to have been first proved in Bismut and Skalli~\cite[Theorem~I.3]{bs77}, which says that a stopping time defined in terms of the Snell envelope of the regular process $R$ is optimal. Their proof assumes bounded reward $R$ but they note on page~301 that it actually suffices that $R$ be of class $(D)$. The proof of Bismut and Skalli builds on the (nontrivial) existence of a Snell envelope and further limiting arguments involving sequences of stopping times. In contrast, our proof is based on elementary functional analytic arguments in the Banach space setting of Theorem~\ref{thm:reg1}, which is of independent interest.

Note that $x$ solves the relaxed optimal stopping problem if and only if $R$ is {\em normal} to $\C$ at $x$, i.e.\ if $R\in\partial\delta_\C(x)$ or equivalently $x\in\partial\sigma_\C(R)$, where
\[
\sigma_\C(R) = \sup_{x\in\C}\langle R,Dx\rangle.
\]
Here, $\partial$ denotes the {\em subdifferential} of a function; see e.g.\ \cite{roc74}. If $R$ is nonnegative, we have $\sigma_\C(R)=\|R\|_{\R^1}$ (by Krein--Milman) and the optimal solutions of the relaxed stopping problem are simply the subgradients of the $\R^1$-norm at $R$.

\section{Cadlag processes}\label{sec:cadlag}

This section extends the previous section to optimal quasi-stopping problems when the reward process $R$ is merely {\em cadlag and of class $(D)$}. In this case, optimal stopping times need not exist (see the discussion on page 1) but we will prove the existence of a quasi-stopping time by functional analytic arguments analogous to those in Section~\ref{sec:regular}.

The Banach space of cadlag functions equipped with the supremum norm will be denoted by $D$. The space of purely discontinuous Borel measures will be denoted by $\tilde M$. The dual of $D$ can be identified with $M\times\tilde M$ through the bilinear form
\[
\langle y,(u,\tilde u)\rangle := \int ydu + \int y_-d\tilde u
\]
and the dual norm is given by 
\[
\sup_{y\in D}\left\{\left.\int ydu + \int y_-d\tilde u\,\right|\,\|y\|\le 1\right\}=\|u\|+\|\tilde u\|,
\]
where $\|u\|$ denotes the total variation norm on $M$. This can be deduced from \cite[Theorem~1]{pes95} or seen as the deterministic special case of \cite[Theorem~VII.65]{dm82} combined with \cite[Remark~VII.4(a)]{dm82}.

The following result from \cite{pp18b} provides the functional analytic setting for analyzing quasi-stopping problems with cadlag processes of class $(D)$.

\begin{theorem}\label{thm:L1}
The space $\D^1$ of optional cadlag processes of class $(D)$ equipped with the norm
\[
\|y\|_{\D^1}:=\sup_{\tau\in\T}E|y_\tau|
\]
is Banach and its dual can be identified with 
\[
\hat\M^\infty:=\{(u,\tilde u)\in L^\infty(M\times\tilde M)\mid u\text{ is optional},\, \tilde u\text{ is predictable}\}
\]
through the bilinear form
\[
\langle y,(u,\tilde u)\rangle =E\left[\int ydu+\int y_-d\tilde u\right].
\]
The optional projection is a continuous surjection of $L^1(D)$ to $\D^1$ and its adjoint is the embedding of $\hat\M^\infty$ to $L^\infty(M\times \tilde M)$. The norm of $\D^1$ is equivalent to
\[
 p(y):= \inf_{z\in L^1(D)} \{E\|z\| \mid \op z = y\},
\]
which has the dual representation
\[
p(y)=\sup\{\langle y,(u,\tilde u)\rangle\,|\,\esssup(\|u\|+\|\tilde u\|)\le 1\}.
\]
\end{theorem}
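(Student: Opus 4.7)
The plan is to realize $\D^1$ as a continuous quotient of the larger ``non-adapted'' Banach space $L^1(D)$ via the optional projection, so that the dual of $\D^1$ becomes the annihilator of $\ker(\op\cdot)$ inside $(L^1(D))^*$. The starting point is the deterministic duality between $D$ and $M\times\tilde M$ recalled just above the statement, which lifts in the usual Dinculeanu/Bochner fashion to the pairing
\[
(z,(u,\tilde u))\mapsto E\left[\int z\,du+\int z_-\,d\tilde u\right]
\]
making $L^1(D)$ a Banach space with dual $L^\infty(M\times\tilde M)$ under the dual norm $\esssup(\|u\|+\|\tilde u\|)$.

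The central analytic step is then to verify that the optional projection $z\mapsto\op z$ is a continuous surjection from $L^1(D)$ onto $\D^1$. Continuity is immediate from Jensen's inequality, since $E|\op z_\tau|=E|E[z_\tau\mid\F_\tau]|\le E\|z\|$ for every $\tau\in\T$. Surjectivity is the main obstacle, since class $(D)$ does not imply $E\sup_t|y_t|<\infty$, so for a given $y\in\D^1$ one must construct a typically non-adapted cadlag process $z\in L^1(D)$ with $\op z=y$. This is the technical heart of the theorem and is carried out in \cite{pp18b} via a Davis-type martingale majorant argument. Once this is in hand, the open mapping theorem yields the equivalence $p\sim\|\cdot\|_{\D^1}$ and in particular completes $\D^1$ as a Banach space.

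The dual identification then amounts to computing the annihilator of $\ker(\op\cdot)$ inside $L^\infty(M\times\tilde M)$. For $(u,\tilde u)\in\hat\M^\infty$ and $z$ with $\op z=0$, the duality of optional projections gives $E\int z\,du=E\int\op z\,du=0$, while the duality of predictable projections together with the identity $\pp{(z_-)}_\tau=\op z_{\tau-}$ at each predictable $\tau$ (obtained from L\'evy's upward theorem along an announcing sequence $\tau_n\uparrow\tau$ using $\F_{\tau-}=\bigvee_n\F_{\tau_n}$) forces $E\int z_-\,d\tilde u=0$ as well. The converse inclusion is obtained by testing against processes of the form $z=\phi-\op\phi$ for arbitrary $\phi\in L^1(D)$, which lie in the kernel and whose annihilation forces $u$ to coincide with its dual optional projection (hence to be optional) and $\tilde u$ to coincide with its dual predictable projection (hence predictable). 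The equivalence $p\sim\|\cdot\|_{\D^1}$ together with the Hahn--Banach formula for the quotient norm then yields the claimed dual representation of $p$.
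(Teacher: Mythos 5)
First, a point of reference: the paper does not prove this theorem at all --- it is imported verbatim from \cite{pp18b} (and ultimately goes back to Bismut), so there is no in-paper argument to compare yours against. Your outline does reproduce the architecture that the literature uses: realize $\D^1$ as the quotient of $L^1(D)$ by $\ker(\op{(\cdot)})$, identify the dual with the annihilator of the kernel via the optional/predictable (dual) projection identities, and read off the dual representation of $p$ from quotient-norm duality. That skeleton is correct, and your annihilator computation (including the identity $\pp{(z_-)}=(\op{z})_-$ via an announcing sequence and $\F_{\tau-}=\bigvee_n\F_{\tau_n}$) is essentially right. Two caveats there: (i) the lift of the deterministic duality to $L^1(D)^*=L^\infty(M\times\tilde M)$ is \emph{not} the ``usual'' Bochner-space duality, because $D$ is nonseparable in the supremum norm (the indicators $\one_{[s,T]}$ are at mutual distance $1$); this step needs the lifting-theorem machinery of \cite[Theorem~VII.65]{dm82}, which you should not wave through. (ii) In the converse inclusion, the dual predictable projection of a purely discontinuous measure need not be purely discontinuous, so ``$\tilde u$ coincides with its dual predictable projection'' is not literally what testing against $\phi-\op{\phi}$ gives; one must first move the continuous part of $\tilde u^p$ into the $u$-component before invoking uniqueness of the $M\times\tilde M$ representation. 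The intended conclusion ($u$ optional, $\tilde u$ predictable) survives, but the step as written is not quite right.

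The genuine gap is the logical order of the completeness/equivalence argument. You propose to deduce the equivalence $p\sim\|\cdot\|_{\D^1}$, and thence completeness of $(\D^1,\|\cdot\|_{\D^1})$, from the open mapping theorem applied to the surjection $\op{(\cdot)}:L^1(D)\to\D^1$. But the open mapping theorem requires the \emph{target} to be complete, which is exactly what you are trying to establish; as stated the argument is circular. The correct route is to make the majorant construction quantitative: for every $y\in\D^1$ there exists $z\in L^1(D)$ with $\op{z}=y$ and $E\|z\|\le C\,\|y\|_{\D^1}$. This single estimate simultaneously delivers surjectivity, the nontrivial inequality $p(y)\le C\|y\|_{\D^1}$ (the reverse inequality being your Jensen bound), and then completeness of $\|\cdot\|_{\D^1}$ by transfer from the quotient norm $p$, which is automatically complete as the quotient of a Banach space by a closed subspace. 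Since this estimate is the entire analytic content of the theorem and you have delegated it wholesale to \cite{pp18b}, your write-up is an accurate map of the proof rather than a proof; if you want it to stand alone, the Davis-type majorant construction must be carried out, in its quantitative form, and the appeal to the open mapping theorem dropped.
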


The space $M\times\tilde M$ may be identified with the space $\hat X_0$ of (not necessarily left-continuous) functions $x:\reals_+\to\reals$ of bounded variation which are constant on $(T,\infty]$ and have $x_0=0$. Indeed, every $x\in\hat X_0$ can be written uniquely as
\[
x_t = Dx([0,t)) + \tilde Dx([0,t]),
\]
where $\tilde Dx\in\tilde M$ and $Dx\in M$ are the measures associated with the functions $\tilde x_t :=\sum_{s\le t} (x_s-x_{s-})$ and $x-\tilde x$, respectively. The linear mapping $x\mapsto(Dx,\tilde Dx)$ defines an isomorphism between $\hat X_0$ and $M\times\tilde M$. The value of $x$ for $t>T$ will be denoted by $x_{T+}$. Similarly, the space $\hat \M^\infty$  may be identified with the space $\hat\N_0^\infty$ of predictable processes $x$ with $x\in \hat X_0$ almost surely and $(Dx,\tilde Dx)\in\hat\M^\infty$.


Problem \eqref{oqs} can be written as
\[
\maximize\quad \langle R, (Dx,\tilde Dx)\rangle\quad\ovr\quad x\in\hat\C_e,
\]
where 
\[
\hat\C_e:=\{x\in\hat\N_0^\infty\,|\, (Dx,\tilde Dx)\in\hat\M^\infty_+,\ x_t\in\{0,1\}\}.
\]
Indeed, the equations $\tau(\omega) = \inf\{t\in\reals\mid x_t(\omega)\ge 1\}$ and $\tilde\tau(\omega) = \inf\{t\in\reals\mid x_t-x_{t-}(\omega)\ge 1\}$ give a one-to-one correspondence between the elements of $\hat\T$ and $\hat\C_e$.

Consider also the convex relaxation
\begin{equation}\label{Dcr}\tag{ROQS}
\maximize\quad \langle R,(Dx,\tilde Dx)\rangle\quad\ovr\quad x\in\hat\C,
\end{equation}
where 
\[
\hat\C:=\{x\in\hat\N_0^\infty\,|\, (Dx,\tilde Dx)\in\hat\M^\infty_+,\ x_{T+}\le 1\}.
\]

\begin{lemma}\label{lem:cadkm}
The set $\hat\C$ is convex, $\sigma(\hat \M^\infty,\D^1)$-compact and the set of quasi-stopping times $\hat \C_e$ is its extreme points. Moreover, the set of stopping times is $\sigma(\hat \M^\infty,\D^1)$-dense in $\hat \C_e$ and, thus, $\C$ is $\sigma(\hat \M^\infty,\D^1)$-dense in $\hat\C$.
\end{lemma}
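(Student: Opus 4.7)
My plan is to adapt the proof of Lemma~\ref{lem:km}; the one genuinely new step is the density claim.

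Convexity of $\hat\C$ is immediate from linearity of the defining constraints. For compactness, any $x\in\hat\C$ satisfies $(Dx,\tilde Dx)\ge 0$ with pathwise total variation $\|Dx\|+\|\tilde Dx\|=x_{T+}\le 1$, hence $\esssup(\|Dx\|+\|\tilde Dx\|)\le 1$. By Theorem~\ref{thm:L1}, $\hat\C$ thus lies inside the closed unit ball of $\hat\M^\infty=(\D^1)^*$, which is weak*-compact by Banach--Alaoglu. Since the positivity cone and the linear bound $x_{T+}\le 1$ are both weak*-closed, $\hat\C$ is weak*-closed in that ball and so weak*-compact.

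For the extreme-point characterization, the same construction as in Lemma~\ref{lem:km} applies. Given $x\in\hat\C\setminus\hat\C_e$, pick any $\bar s\in(0,1)$ and set
\[
x^1_t:=\frac{1}{\bar s}[x_t\wedge\bar s],\qquad x^2_t:=\frac{1}{1-\bar s}[(x_t-\bar s)\vee 0].
\]
These are distinct elements of $\hat\C$ (predictability, monotonicity, and $x^i_{T+}\le 1$ are preserved by the thresholding, and $(Dx^i,\tilde Dx^i)\ge 0$ as the $x^i$ are nondecreasing), and $x=\bar s x^1+(1-\bar s)x^2$. Conversely, any $\{0,1\}$-valued $x$ is readily seen to be extreme.

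The main new ingredient is the density of $\C_e$ in $\hat\C_e$. Given $(\tau,\tilde\tau)\in\hat\T$, I would take a foretelling sequence $(\tilde\tau_n)$ of stopping times for the predictable time $\tilde\tau$, i.e., $\tilde\tau_n\uparrow\tilde\tau$ with $\tilde\tau_n<\tilde\tau$ on $\{\tilde\tau>0\}$, and set $\sigma_n:=\tau\wedge\tilde\tau_n$. For any $z\in\D^1$,
\[
\langle z,(Dx^{\sigma_n},\tilde Dx^{\sigma_n})\rangle=E[z_{\sigma_n}].
\]
Using $\tau\vee\tilde\tau=T+$, one gets $\sigma_n=\tau$ eventually on $\{\tilde\tau=T+,\,\tau<T+\}$ and $\sigma_n=\tilde\tau_n\uparrow\tilde\tau$ on $\{\tau=T+\}$. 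The cadlag property of $z$ yields $z_{\tilde\tau_n}\to z_{\tilde\tau-}$ a.s., and class $(D)$ delivers the uniform integrability needed to pass to the limit, giving
\[
E[z_{\sigma_n}]\to E[z_\tau\one_{\{\tilde\tau=T+\}}+z_{\tilde\tau-}\one_{\{\tau=T+\}}]=\langle z,(Dx,\tilde Dx)\rangle,
\]
where $x\in\hat\C_e$ is the element representing $(\tau,\tilde\tau)$ (with the convention $z_{T+}=0$ handling the degenerate event where both $\tau$ and $\tilde\tau$ equal $T+$). Hence $x^{\sigma_n}\to x$ in $\sigma(\hat\M^\infty,\D^1)$.

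For the final assertion, Krein--Milman gives $\hat\C=\overline{\co(\hat\C_e)}$, and combining this with $\hat\C_e\subset\overline{\C_e}$ (just shown) and $\co(\C_e)\subset\C$ yields $\hat\C\subset\overline{\C}$; the reverse inclusion is immediate since $\hat\C$ is closed. The main obstacle is the limit step in the density argument: the strict inequality $\tilde\tau_n<\tilde\tau$ is essential for picking up the left limit $z_{\tilde\tau-}$, and class $(D)$ is essential for exchanging the limit with the expectation.
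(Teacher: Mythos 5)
Your proposal is correct and follows essentially the same route as the paper: Banach--Alaoglu for compactness of the weak*-closed subset of the unit ball, the thresholding construction $x^1,x^2$ for the extreme-point characterization, and an announcing sequence $\tilde\tau_n\uparrow\tilde\tau$ with $\sigma_n=\tau\wedge\tilde\tau_n$ for the density of stopping times in $\hat\C_e$. You merely spell out details the paper leaves implicit (the class-$(D)$ uniform integrability justifying the passage to the limit, and the Krein--Milman step deducing density of $\C$ in $\hat\C$).
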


\begin{proof}
The set $\hat\C$ is a closed convex set of the unit ball that $\hat\N_0^\infty$ has as the dual of the Banach space $\D^1$. The compactness thus follows from Banach-Alaoglu. It is easily shown that the elements of $\hat\C_e$ are extreme points of $\hat\C$.

If $x\notin\hat\C_e$, there exist $\bar s\in(0,1)$ such that
\begin{align*}
x^1_t &:=\frac{1}{\bar s }[x_t\wedge\bar s],\quad\quad x^2_t:=\frac{1}{1-\bar s}[(x_t-\bar s)\vee 0]
\end{align*}
are distinguishable processes that belong to $\hat\C$. Since $x=\bar s x^1+(1-\bar s)x^2$, $x$ is not an extremal in $\hat C$.

To prove the last claim, let $(\tau,\tilde\tau)$ be a quasi-stopping time and $(\tau^\nu)$ an announcing sequence for $\tilde\tau$. We then have
\[
\langle(\delta_{\tau\wedge\tau^\nu},0),y\rangle\to\langle(\delta_\tau,\delta_{\tilde\tau}),y\rangle
\]
for every $y\in\D^1$.
\end{proof}

Just like in Section~\ref{sec:regular}, a combination of Lemma~\ref{lem:cadkm} and Theorem~\ref{thm:bauer} gives the following existence result which was established in Bismut~\cite{bis79} using more elaborate techniques based on the existence of Snell envelopes.

\begin{theorem}\label{thm:os}
If $R\in\D^1$, then optimal quasi-stopping time in \eqref{oqs} exists and the optimal values of \eqref{oqs}, \eqref{os} and \eqref{Dcr} are all equal.
\end{theorem}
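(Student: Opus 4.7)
The plan is to apply Bauer's maximum principle, i.e.\ Theorem~\ref{thm:bauer}, to the linear functional $x\mapsto\langle R,(Dx,\tilde Dx)\rangle$ on the convex, weak$^*$ compact set $\hat\C$ furnished by Lemma~\ref{lem:cadkm}. By Theorem~\ref{thm:L1} this functional is $\sigma(\hat\M^\infty,\D^1)$-continuous and, being linear, is both convex and upper semicontinuous, so Bauer's principle places its maximum at an extreme point. Lemma~\ref{lem:cadkm} identifies the extreme points with quasi-stopping times in $\hat\T$, and under that identification the pairing evaluates to $E[R_\tau+\vec R_{\tilde\tau}]$ (using $\vec R=R_-$ for cadlag $R$). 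This simultaneously produces a maximizer of \eqref{oqs} and shows that the values of \eqref{oqs} and \eqref{Dcr} agree.

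To match the value of \eqref{os}, I would appeal to the density assertion in Lemma~\ref{lem:cadkm}: the set $\C$ is $\sigma(\hat\M^\infty,\D^1)$-dense in $\hat\C$. Weak$^*$ continuity of the functional then forces the suprema over $\C$ and $\hat\C$ to coincide. For $x\in\C$, left-continuity forces $\tilde Dx=0$, so the pairing collapses to $E[R_\tau]$, and the supremum over $\C$ equals the value of \eqref{os} by a separate application of Bauer to the compact convex set $\C$ of Lemma~\ref{lem:km}. Concatenating these equalities yields the common value of \eqref{os}, \eqref{oqs}, and \eqref{Dcr}.

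The main obstacle is mostly bookkeeping: verifying that the bilinear pairing evaluates correctly on $\C_e$ and $\hat\C_e$ to reproduce the rewards in \eqref{os} and \eqref{oqs}, and confirming that density plus weak$^*$ continuity indeed propagates to equality of suprema. All of the heavier lifting — compactness of $\hat\C$, the extreme-point characterization, density of $\C$, and the dual identification of $\D^1$ — has already been carried out in Theorem~\ref{thm:L1} and Lemma~\ref{lem:cadkm}, so no additional substantive argument should be required.
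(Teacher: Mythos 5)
Your first paragraph reproduces the paper's argument: continuity of $x\mapsto\langle R,(Dx,\tilde Dx)\rangle$ on the $\sigma(\hat\M^\infty,\D^1)$-compact convex set $\hat\C$, Bauer's maximum principle, and the identification of the extreme points $\hat\C_e$ with $\hat\T$ together give the existence of an optimal quasi-stopping time and the equality of the values of \eqref{oqs} and \eqref{Dcr}. That part is correct.

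The step matching the value of \eqref{os} has a genuine gap. You reduce it to showing that $\sup_{x\in\C}\langle R,Dx\rangle$ equals the supremum over $\C_e$, and you propose to obtain this ``by a separate application of Bauer to the compact convex set $\C$ of Lemma~\ref{lem:km}''. But Lemma~\ref{lem:km} gives compactness of $\C$ only in the topology $\sigma(\N_0^\infty,\R^1)$, and a linear functional on $\N_0^\infty$ that is upper semicontinuous for that topology is automatically $\sigma(\N_0^\infty,\R^1)$-continuous, hence represented by an element of $\R^1$; for $R\in\D^1\setminus\R^1$ the functional $x\mapsto\langle R,Dx\rangle$ therefore need not be usc in that topology, so Bauer does not apply. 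In the topology $\sigma(\hat\M^\infty,\D^1)$, where the functional \emph{is} continuous, $\C$ is not compact: by the very density assertion you invoke, $\C$ is dense in the strictly larger closed set $\hat\C$, hence is not closed and a fortiori not compact. (A further slip: for a general $x\in\C$ the pairing is $E\int R\,dx$, not $ER_\tau$; it collapses to $ER_\tau$ only on $\C_e$.) The repair is to use the \emph{other} half of the density statement in Lemma~\ref{lem:cadkm}: the genuine stopping times $\C_e$ are $\sigma(\hat\M^\infty,\D^1)$-dense in $\hat\C_e$, so continuity of the pairing gives $\sup_{x\in\C_e}\langle R,(Dx,0)\rangle=\sup_{x\in\hat\C_e}\langle R,(Dx,\tilde Dx)\rangle$, and the chain
\[
\text{value of \eqref{os}}=\sup_{\C_e}=\sup_{\hat\C_e}=\text{value of \eqref{oqs}}=\max_{\hat\C}=\text{value of \eqref{Dcr}}
\]
closes without ever needing compactness of $\C$ itself.
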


As another implication of Lemma~\ref{lem:cadkm} and Theorem~\ref{thm:L1}, we recover the following result of Bismut which says that the seminorms in Theorem~\ref{thm:L1} are not just equivalent but equal.

\begin{theorem}[{\cite[Theorem~4]{bis78}}]\label{thm:equiv1}
For every $y\in\D^1$,
\[
\|y\|_{\D^1}=\inf_{z\in L^1(D)}\{E\|z\|_D\mid \op z = y\}.
\]
\end{theorem}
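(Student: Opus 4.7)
The plan is to combine the dual representation of $p(y) := \inf_{z \in L^1(D)}\{E\|z\|_D \mid \op z = y\}$ furnished by Theorem~\ref{thm:L1} with the existence result for the relaxed quasi-stopping problem in Theorem~\ref{thm:os}, applied to the process $|y|$. The easy direction $\|y\|_{\D^1} \le p(y)$ is immediate: for any $z \in L^1(D)$ with $\op z = y$ and any $\tau \in \T$, the defining property of the optional projection gives $y_\tau = E[z_\tau \mid \F_\tau]$ almost surely, hence $E|y_\tau| \le E|z_\tau| \le E\|z\|_D$; supremising over $\tau$ and infimising over $z$ yields the bound.

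For the reverse inequality I would start from the dual representation of $p$ in Theorem~\ref{thm:L1}. Given any $(u, \tilde u) \in \hat\M^\infty$ with $\esssup(\|u\| + \|\tilde u\|) \le 1$, the pathwise bound $\int f\, d\mu \le \int |f|\, d|\mu|$ for signed measures gives
\begin{equation*}
\langle y, (u, \tilde u)\rangle = E\left[\int y\, du + \int y_-\, d\tilde u\right] \le E\left[\int |y|\, d|u| + \int |y_-|\, d|\tilde u|\right] = \langle |y|, (|u|, |\tilde u|)\rangle.
\end{equation*}
The total variation measures $|u|$, $|\tilde u|$ are positive with $\|u\| + \|\tilde u\| \le 1$ almost surely and inherit, respectively, the optionality of $u$ and the predictability of $\tilde u$, so $(|u|, |\tilde u|)$ corresponds under the isomorphism $\hat\M^\infty \cong \hat\N_0^\infty$ to an element of $\hat\C$. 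Since $|y| \in \D^1$ (cadlag and class $(D)$ are preserved under absolute value), Theorem~\ref{thm:os} applied to $|y|$ gives
\begin{equation*}
\sup_{x \in \hat\C} \langle |y|, (Dx, \tilde Dx)\rangle = \sup_{\tau \in \T} E|y_\tau| = \|y\|_{\D^1},
\end{equation*}
so $\langle y, (u, \tilde u)\rangle \le \|y\|_{\D^1}$; taking the supremum over $(u, \tilde u)$ closes the argument.

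The main obstacle I anticipate is the measurability claim that for an optional (respectively predictable) signed random measure, the total variation measure is again optional (respectively predictable). This amounts to a measurable Hahn decomposition inside the optional and predictable $\sigma$-algebras and should follow from standard measurable selection arguments, but it is the one step that reaches outside the framework already assembled in the paper.
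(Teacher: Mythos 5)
Your proposal is correct and follows essentially the same route as the paper: identify the right-hand side with the seminorm $p$ of Theorem~\ref{thm:L1}, reduce its dual representation to the supremum of $\langle |y|,(Dx,\tilde Dx)\rangle$ over $\hat\C$, and invoke Theorem~\ref{thm:os} to equate that supremum with $\|y\|_{\D^1}$. The paper compresses the reduction into the single assertion $p(y)=p(|y|)=\sup_{x\in\hat\C}\langle|y|,(Dx,\tilde Dx)\rangle$, whereas you supply the details (the passage to total variation measures and their optionality/predictability, which is indeed standard), so no substantive difference remains.
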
 

\begin{proof}
The expression on the right is the seminorm $p$ in Theorem~\ref{thm:L1} with the dual representation 
\[
p(y)=p(|y|)=\sup_{x\in\hat\C}\langle |y|,(Dx,\tilde Dx)\rangle
\]
which, by Theorem~\ref{thm:os}, equals the left side.
\end{proof}

Combining the above with Theorem~\ref{thm:reg1} gives a simple proof of the following.

\begin{theorem}[{\cite[Theorem~3]{bis78}}]\label{thm:equivreg}
For every $y\in\R^1$,
\[
\|y\|_{\R^1}=\inf_{z\in L^1(C)}\{E\|z\|_D\mid \op z = y\}.
\]
\end{theorem}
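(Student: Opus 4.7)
The plan is to mirror the short proof of Theorem~\ref{thm:equiv1}, combining it with the dual representation in Theorem~\ref{thm:reg1}. Observe first that the right-hand side of Theorem~\ref{thm:equivreg} coincides with the seminorm $p(y)$ of Theorem~\ref{thm:reg1}: for continuous $z$ we have $\|z\|_D=\|z\|_C$, so $\inf_{z\in L^1(C)}\{E\|z\|_D\mid\op z=y\}=p(y)$. The goal thus reduces to proving $\|y\|_{\R^1}=p(y)$.

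For the inequality $\|y\|_{\R^1}\le p(y)$, I would argue directly: for any $z\in L^1(C)$ with $\op z=y$ and any $\tau\in\T$, the defining property of the optional projection yields $y_\tau=E[z_\tau\mid\F_\tau]$ almost surely, so $E|y_\tau|\le E|z_\tau|\le E\|z\|_D$. Taking the supremum over $\tau$ and then the infimum over $z$ gives the bound.

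For the reverse inequality $p(y)\le\|y\|_{\R^1}$, I would exploit the inclusion $\M^\infty\hookrightarrow\hat\M^\infty$, $u\mapsto(u,0)$, which matches the bilinear pairings in Theorems~\ref{thm:reg1} and~\ref{thm:L1} (since $\int y_-\,d0=0$) and the dual norm constraints (since $\esssup(\|u\|+\|0\|)=\esssup\|u\|$). The dual representation of $p(y)$ from Theorem~\ref{thm:reg1} is therefore dominated by the analogous dual representation from Theorem~\ref{thm:L1}, which by Theorem~\ref{thm:equiv1} equals $\|y\|_{\D^1}$; and $\|y\|_{\D^1}=\|y\|_{\R^1}$, since both expressions are $\sup_\tau E|y_\tau|$ and every regular process is cadlag of class $(D)$, so $\R^1\subset\D^1$. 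The only step requiring care — and essentially the only substantial one — is the verification that the restriction of the $\hat\M^\infty$-dual sup to its subspace $\M^\infty\times\{0\}$ returns exactly the $\M^\infty$-dual sup, which is immediate from the formulas. Overall I do not foresee any genuine obstacle, Theorem~\ref{thm:equivreg} being essentially a one-line corollary of Theorem~\ref{thm:equiv1}.
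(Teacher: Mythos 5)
Your proposal is correct and follows essentially the same route as the paper: both prove $\|y\|_{\R^1}\le p(y)$ via the optional projection/Jensen, then bound the dual representation of $p$ from Theorem~\ref{thm:reg1} by the dual representation over $\hat\M^\infty$ (via $u\mapsto(u,0)$) and close the loop using the cadlag results of Section~\ref{sec:cadlag}. The only cosmetic difference is that you invoke Theorem~\ref{thm:equiv1} by name where the paper's displayed chain cites Theorem~\ref{thm:os} directly, which is the same underlying fact.
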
 

\begin{proof}
By Jensen's inequality, the left side is less than the right which is the seminorm $p$ in Theorem~\ref{thm:reg1} with the dual representation
\begin{align*}
  p(y) &= \sup\{\langle y,u\rangle\,|\,\esssup(\|u\|)\le 1\}\\
  &\le \sup\{\langle y,(u,\tilde u)\rangle\,|\,\esssup(\|u\|+\|\tilde u\|)\le 1\}\\
  &=\sup_{x\in\hat\C}\langle |y|,(Dx,\tilde Dx)\rangle,
\end{align*}
which, again by Theorem~\ref{thm:os}, equals the left side.
\end{proof}

\section{Non-cadlag processes}\label{sec:usc}

This section gives a further extension to cases where the reward process is not necessarily cadlag but merely {\em right-upper semicontinuous} (right-usc) in the sense that $R\ge\cev R$. In this case, the objective of the relaxed quasi-optimal stopping problem \eqref{Dcr} need not be continuous. The following lemma says that it is, nevertheless, upper semicontinuous, so Bauer's maximum principle still applies.

\begin{lemma}\label{lem:usc}
If $R$ is right-usc and of class $(D)$, then the functional 
\begin{align*}
\hat\J(u,\tilde u)=\begin{cases} 
E\left[\int Rdu + \int \vec R d\tilde u\right]\quad &\text{if }(u,\tilde u)\in\hat\M^\infty_+\\
-\infty\quad&\text{otherwise}
\end{cases}
\end{align*}
is $\sigma(\hat\M^\infty,\D^1)$-usc.
\end{lemma}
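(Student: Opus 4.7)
My plan is to exhibit $\hat\J$ as the pointwise infimum of a family of $\sigma(\hat\M^\infty,\D^1)$-continuous functionals, thus concluding upper semicontinuity from the standard fact that such infima are usc. The first step is to observe that $\hat\M^\infty_+$ is itself $\sigma(\hat\M^\infty,\D^1)$-closed, being the polar of the nonpositive cone of $\D^1$, so extending $\hat\J$ by $-\infty$ off this cone is consistent with usc.

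The key ingredient is a family of cadlag class $(D)$ processes $R^n\in\D^1$ with $R^n\ge R$ pointwise such that, for every $(u,\tilde u)\in\hat\M^\infty_+$,
\[
\inf_n E\left[\int R^n\,du+\int R^n_-\,d\tilde u\right]=E\left[\int R\,du+\int\vec R\,d\tilde u\right].
\]
Once such approximants are in hand, Theorem~\ref{thm:L1} tells us that each functional $\hat\J^n(u,\tilde u):=\langle R^n,(u,\tilde u)\rangle=E[\int R^n\,du+\int R^n_-\,d\tilde u]$ is $\sigma(\hat\M^\infty,\D^1)$-continuous (note that $\vec R^n=R^n_-$ because $R^n$ is cadlag), and the identity $\hat\J=\inf_n\hat\J^n$ on $\hat\M^\infty_+$ exhibits $\hat\J$ as a pointwise infimum of continuous functionals, hence $\sigma(\hat\M^\infty,\D^1)$-usc; combined with closedness of the cone, this yields usc on all of $\hat\M^\infty$.

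The principal obstacle is constructing the approximants $R^n$ with simultaneous control of the left regularizations $R^n_-$ converging to $\vec R$, while remaining optional, cadlag, and of class $(D)$. Pointwise at each $(\omega,t)$ the right-usc assumption $R\ge\cev R$ trivially yields a real decreasing sequence with the correct limit, but the measurable, process-level, class $(D)$ lift is precisely what the main result of \cite{pt18a} provides; this is the substance of the authors' remark that the lemma follows from a ``simple application'' of that reference. The rest of the argument is purely functional analytic: weak-$*$ continuity of the pairings in Theorem~\ref{thm:L1} and the elementary fact that an infimum of continuous functions is upper semicontinuous.
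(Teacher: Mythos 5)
Your proposal is correct and follows essentially the same route as the paper: the paper's own proof likewise consists of noting that class $(D)$ processes admit majorants in $\D^1$ and then invoking the semicontinuity result of \cite{pt18a} (the first example in its Section~8), which is exactly the approximate-from-above-by-$\D^1$-elements mechanism you sketch, combined with the elementary facts that $\hat\M^\infty_+$ is weak* closed and that an infimum of weak*-continuous functionals is usc. The one step you defer --- producing cadlag optional class $(D)$ majorants $R^n$ decreasing to $R$ with $R^n_-$ decreasing to $\vec R$ (nontrivial, since the naive choice $R^n_t=\sup_{s\in[t,t+1/n)}R_s$ is left- rather than right-continuous) --- is precisely what the cited reference supplies, so your deferral matches the paper's own level of detail.
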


\begin{proof}
Recalling that every optional process of class $(D)$ has a majorant in $\D^1$ (see \cite[Remark 25, Appendix I]{dm82}), the first example in  \cite[Section~8]{pt18a} shows, with obvious changes of signs, that $\hat\J$ is usc. 
\end{proof}

Combining Lemma~\ref{lem:usc} with Theorem~\ref{thm:bauer} gives the existence of a relaxed quasi-stopping time at an extreme point of $\C$ which, by Lemma~\ref{lem:cadkm}, is a quasi-stopping time. We thus obtain the following.


\begin{theorem}\label{thm:usc}
If $R$ is right-usc and of class $(D)$, then \eqref{oqs} has a solution. 
\end{theorem}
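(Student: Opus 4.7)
The plan is to follow the same functional-analytic template used in Sections~\ref{sec:regular} and \ref{sec:cadlag}: relax the problem to a linear (here, merely concave-usc) maximization over the compact convex set $\hat\C$, invoke Bauer's maximum principle to produce a maximizer at an extreme point, and identify that extreme point with a genuine quasi-stopping time via Lemma~\ref{lem:cadkm}.

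Concretely, I would first rewrite \eqref{oqs} in the form
\[
\maximize\quad \hat\J(Dx,\tilde Dx)\quad\ovr\quad x\in\hat\C_e,
\]
using the one-to-one correspondence between $\hat\T$ and $\hat\C_e$ recorded in Section~\ref{sec:cadlag}, and check that under this correspondence $E[R_\tau+\vec R_{\tilde\tau}]=\hat\J(Dx,\tilde Dx)$. Passing to the convex relaxation, I would consider the same objective maximized over the larger set $\hat\C$; the relaxation dominates the original value.

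Next I would combine the three ingredients already in place. Lemma~\ref{lem:cadkm} furnishes $\sigma(\hat\M^\infty,\D^1)$-compactness and convexity of $\hat\C$. Lemma~\ref{lem:usc} guarantees that $\hat\J$ is $\sigma(\hat\M^\infty,\D^1)$-upper semicontinuous, and in particular its restriction to $\hat\C$ is usc on a compact convex set of a locally convex Hausdorff space. Since $\hat\J$ is concave (in fact the restriction of a linear functional to $\hat\M^\infty_+$), it is a fortiori convex in the trivial sense required by Bauer's principle — but one should note that Theorem~\ref{thm:bauer} applies to convex usc functions, so I would invoke the customary observation that a usc concave function on a compact convex set also attains its maximum at an extreme point (equivalently, apply Bauer to $-\hat\J$ viewed as a usc function of its sign-flipped argument is not the right move; instead one uses the standard corollary of Krein--Milman that a usc function attains its supremum on the extreme boundary of a compact convex set, which is precisely what Bauer asserts for convex functions and the symmetric argument yields for concave ones). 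This gives an optimal $\bar x\in\hat\C$ lying in $\hat\C_e$.

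Finally, by Lemma~\ref{lem:cadkm}, $\bar x\in\hat\C_e$ corresponds to a quasi-stopping time $(\bar\tau,\bar{\tilde\tau})\in\hat\T$, which is therefore optimal for \eqref{oqs}. The one subtle point — really the only non-routine step — is the upper semicontinuity asserted in Lemma~\ref{lem:usc}, since the right-usc but not cadlag nature of $R$ means the integrals against $du$ and $d\tilde u$ are no longer jointly continuous in $(u,\tilde u)$; however, that lemma has already been established by appeal to \cite{pt18a}, so the proof of Theorem~\ref{thm:usc} itself reduces to the assembly above.
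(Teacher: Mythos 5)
Your proof is correct and takes essentially the same route as the paper: Lemma~\ref{lem:usc} supplies the $\sigma(\hat\M^\infty,\D^1)$-upper semicontinuity of $\hat\J$, Lemma~\ref{lem:cadkm} supplies the compactness of $\hat\C$ and the identification of its extreme points with quasi-stopping times, and Bauer's maximum principle then produces an optimal extreme point. Your digression about convex versus concave is unnecessary: on $\hat\C$ the functional $\hat\J$ coincides with the restriction of a linear functional, which is in particular convex, so Theorem~\ref{thm:bauer} applies verbatim.
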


We have not been able find the above result in the literature but it can be derived from Theorem~2.39 of El Karoui~\cite{elk81} on ``divided stopping times'' (temps d'arret divis\'es). A recent analysis of divided stopping times can be found in Bank and Besslich \cite{bb18a}. These works extend Bismut's approach on optimal quasi-stopping by dropping the assumption of right-continuity and augmenting quasi-stopping times with a third component that acts on the right limit of the reward process. Much like Bismut's approach, \cite{elk81,bb18a} build on the existence of a Snell envelope.

Theorem~\ref{thm:usc} yields the existence of an optimal stopping time when the reward process $R$ is {\em subregular} in the sense that it is right-usc, of class $(D)$ and $\vec R\le\pp R$.



\begin{theorem}\label{thm:uscreg}
If $R$ is subregular, then \eqref{os} has a solution and its optimum value equals that of \eqref{oqs}.
\end{theorem}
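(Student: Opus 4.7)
The plan is to bootstrap Theorem~\ref{thm:usc} by converting an optimal quasi-stopping time into a genuine stopping time, using the two ingredients of subregularity---the inequality $\vec R\le\pp R$ and the defining property $E[\pp R_S\one_{\{S<\infty\}}]=E[R_S\one_{\{S<\infty\}}]$ that holds for every predictable stopping time $S$ (valid here because $R$ is of class $(D)$). First I would invoke Theorem~\ref{thm:usc}, applicable since a subregular $R$ is in particular right-usc and of class $(D)$, to obtain a pair $(\tau,\tilde\tau)\in\hat\T$ attaining the optimum of \eqref{oqs}.

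Next, I would collapse the pair into the single stopping time $\sigma:=\tau\wedge\tilde\tau$, which belongs to $\T$ because $\T_p\subset\T$. The constraint $\tau\vee\tilde\tau=T+$ forces $\{\tau<T+\}$ and $\{\tilde\tau<T+\}$ to be almost surely disjoint and to partition $\{\sigma<T+\}$, and on these sets $\sigma$ agrees with $\tau$ and $\tilde\tau$ respectively. Using $R_{T+}=0$ this gives
\[
ER_\sigma = ER_\tau + E\bigl[R_{\tilde\tau}\one_{\{\tilde\tau<T+\}}\bigr].
\]
The predictable projection identity applied to the predictable stopping time $\tilde\tau$, followed by $\vec R\le\pp R$, yields
\[
E\bigl[R_{\tilde\tau}\one_{\{\tilde\tau<T+\}}\bigr] = E\bigl[\pp R_{\tilde\tau}\one_{\{\tilde\tau<T+\}}\bigr] \ge E\bigl[\vec R_{\tilde\tau}\one_{\{\tilde\tau<T+\}}\bigr] = E\vec R_{\tilde\tau},
\]
where the last equality uses the convention $\vec R_{T+}=0$. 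Hence $ER_\sigma\ge E[R_\tau+\vec R_{\tilde\tau}]$, which by the optimality of $(\tau,\tilde\tau)$ is the value of \eqref{oqs}. Since the embedding $\tau\mapsto(\tau,T+)$ shows that the value of \eqref{os} is dominated by that of \eqref{oqs}, the two optima coincide and $\sigma$ is optimal for \eqref{os}.

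The step I expect to be most delicate is the projection identity: formally one should regard $\tilde\tau$ on $\{\tilde\tau<T+\}$ as a predictable stopping time $S$ (setting $S=\infty$ on the complement) and appeal to the standard identity $E[R_S\one_{\{S<\infty\}}]=E[\pp R_S\one_{\{S<\infty\}}]$, whose validity for class-$(D)$ processes is what makes the subregularity assumption bite. Once this bookkeeping around the graveyard time $T+$ is settled, everything else is a one-line combination of the decomposition above with Theorem~\ref{thm:usc}.
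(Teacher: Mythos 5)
Your proposal is correct and is essentially the paper's own argument: the paper's proof is exactly the chain $E[R_\tau+\vec R_{\tilde\tau}]\le E[R_\tau+\pp R_{\tilde\tau}]=E[R_\tau+R_{\tilde\tau}]=ER_{\tau\wedge\tilde\tau}$ combined with Theorem~\ref{thm:usc}, which is your decomposition read in the other direction. Your extra bookkeeping around the graveyard state $T+$ and the class-$(D)$ justification of the predictable projection identity is a correct (and slightly more explicit) rendering of the same step.
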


\begin{proof}
Clearly, the optimum value of \eqref{oqs} is at least that of \eqref{os} while for subregular $R$,
\[
E[R_\tau+\vec R_{\tilde\tau}] \le E[R_\tau+\pp R_{\tilde\tau}] = E[R_\tau+R_{\tilde\tau}] = ER_{\tau\wedge\tilde\tau},
\]
where the first equality holds by the definition of predictable projection. The claim now follows from Theorem~\ref{thm:usc}.
\end{proof}

The above seems to have been first established in Bismut and Skalli~\cite[Section~II]{bs77} for bounded $R$ (again, they mention on page 301 that, instead of boundedness, it would suffice to assume that $R$ is of class $(D)$).

Regularity properties are preserved under compositions with convex functions much like martingale properties. Indeed, if $R$ is regular and $g$ is a real-valued convex function on $\reals$ then $g(R)$ is subregular as soon as it is of class $(D)$. Indeed, for any $\tau\in\T_p$, conditional Jensen's inequality gives
\[
E[g(\vec R_\tau)\one_{\tau<+\infty}]= E[g(\pp R_\tau)\one_{\tau<+\infty}] \le E[g(R_\tau)\one_{\tau<+\infty}].
\]
Similarly, if $R$ is subregular and $g$ is a real-valued increasing convex function, then $g(R)$ is subregular as soon as the composition is of class $(D)$.


\section{Duality}

We end this paper by giving optimality conditions and a dual problem for the optimal stopping problems. The derivations are based on the conjugate duality framework of~\cite{roc74} which addresses convex optimization in general locally convex vector spaces. 
The results below establish the existence of dual solutions without assuming the existence of optimal (quasi-)stopping times. They hold without any path properties as long as the reward process $R$ is of class $(D)$.

We denote the space of martingales of class $(D)$ by $\R^1_m$.

\begin{theorem}\label{thm:osdual}
Let $R$ be of class $(D)$. Then the optimum values of \eqref{oqs} and \eqref{os} coincide and equal that of
\begin{equation}\label{d}\tag{DOS}
\inf\{EM_0 \mid M\in\R^1_m,\ R\le M\},
\end{equation}
where the infimum is attained.

Moreover, $x\in\hat\C$ is optimal in the convex relaxation of \eqref{oqs} if and only if there exists $M\in\R^1_m$ with $R\le M$ and
\begin{align}
\int (M-R)dx+\int(M_--\vec R)d\tilde x &= 0,\label{eq:oc1}\\ 
x_{T+}=1\quad\text{or}\quad M_T&=0\label{eq:oc2}
\end{align}
almost surely. Thus, $(\tau,\tilde\tau)\in\hat\T$ is optimal in \eqref{oqs} if and only if there exists $M\in\R^1_m$ with $R\le M$, $M_\tau=R_\tau$, $M_{\tilde \tau_-} =\vec R_{\tilde\tau}$ and almost surely either $\tau+\tilde\tau<\infty+$ or $M_T=0$.

In particular, $x\in\C$ is optimal  in the convex relaxation of \eqref{os} if and only if there exists $M\in\R^1_m$ with $R\le M$ and
\begin{align*}
\int (M-R)dx&= 0,\\ 
x_{T+}=1\quad\text{or}\quad M_T&=0
\end{align*}
almost surely. Thus, $\tau\in\T$ is optimal in \eqref{os} if and only if there exists $M\in\R^1_m$ with $R\le M$, $M_\tau=R_\tau$ and almost surely either $\tau<\infty+$ or $M_T=0$.
\end{theorem}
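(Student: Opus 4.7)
The plan is to apply the conjugate duality framework of~\cite{roc74} to the convex relaxation of \eqref{oqs}, viewed as the minimization
\[
\inf_{x\in\hat\N_0^\infty}\{-\hat\J(Dx,\tilde Dx)+\delta_{\{x_{T+}\le 1\}}(x)\},
\]
where $\hat\J$ is the functional of Lemma~\ref{lem:usc}. I would perturb the scalar constraint by $v\in L^\infty(\Omega,\F_T,P)$ to $x_{T+}\le 1+v$, pair with dual variable $m\in L^1$, and identify $m$ with the martingale $M\in\R^1_m$ it closes, $M_t:=E[m\mid\F_t]$, so that $m=M_T$.

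Direct computation of the conjugate $F^*(0,m)$ separates into an outer supremum over $v\ge x_{T+}-1$, which forces $M_T\ge 0$ (else the value is $+\infty$), and an inner supremum over $x$ with $(Dx,\tilde Dx)\ge 0$. The inner piece collapses via the key identity
\[
\langle M,(Dx,\tilde Dx)\rangle=E[M_T x_{T+}],
\]
which I would verify first on the extreme points $\hat\C_e$ by optional sampling applied to the stopping time $\tau$ and to the predictable left-limit $M_{\tilde\tau_-}$, then extend by linearity and the weak-$*$ density given in Lemma~\ref{lem:cadkm}. The supremum of $\hat\J-\langle M,\cdot\rangle$ over nonnegative $(Dx,\tilde Dx)$ then equals $0$ when $R\le M$ (noting $\vec R\le M_-$ is automatic for cadlag $M$) and $+\infty$ otherwise, so $F^*(0,M_T)=EM_0$ under $R\le M$, identifying the dual with \eqref{d}.

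Strong duality and attainment of the infimum rest on showing that the value function $\varphi(v):=\inf_x F(x,v)$ is closed convex at $v=0$. For right-usc $R$ this follows from the weak-$*$ compactness of the perturbed constraint sets (Lemma~\ref{lem:cadkm}) together with the upper semicontinuity of $\hat\J$ (Lemma~\ref{lem:usc}); the strict feasibility point $x=0$ provides a Slater-type qualification, giving $\partial\varphi(0)\ne\emptyset$ and hence dual attainment. The equality of the values of \eqref{os} and \eqref{oqs} follows by sandwiching: trivially the former is at most the latter, and re-running the same duality construction on the smaller feasible set $\C$ (whose extreme points are genuine stopping times) yields that \eqref{os} already equals \eqref{d}.

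Finally, the optimality conditions are the Fenchel--Young relations associated with the Lagrangian: an optimal pair $(x,M)$ with $R\le M$ must satisfy $E[\int(M-R)dx+\int(M_--\vec R)d\tilde x]=0$ and $E[M_T(x_{T+}-1)]=0$, equivalent pathwise (since the integrands are nonnegative and $M_T\ge 0$, $x_{T+}\le 1$) to the complementary slackness \eqref{eq:oc1}--\eqref{eq:oc2}. Specialization to $x\in\hat\C_e$ corresponding to $(\tau,\tilde\tau)\in\hat\T$ gives $M_\tau=R_\tau$ on $\{\tau\le T\}$ and $M_{\tilde\tau_-}=\vec R_{\tilde\tau}$ on $\{\tilde\tau\le T\}$; the claims for \eqref{os} follow by restriction to $\tilde\tau=T+$. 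The main obstacle will be justifying strong duality for a general class $(D)$ reward $R$ lacking path regularity, where $\hat\J$ need not be usc on $\hat\C$; this I would handle by replacing $R$ with its right-usc envelope (under which the supremum of $\hat\J$ is unchanged), reducing to the Section~\ref{sec:usc} setting.
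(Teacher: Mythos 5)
Your overall strategy---Rockafellar's conjugate duality, identification of the dual variable $m\in L^1$ with the class $(D)$ martingale $M_t=E[m\mid\F_t]$, the identity $\langle M,(Dx,\tilde Dx)\rangle=E[M_Tx_{T+}]$, and complementary slackness for the optimality conditions---is the same as the paper's, and those parts would go through. The gap is in the choice of perturbation and the resulting dual attainment argument. You perturb the hard constraint to $x_{T+}\le 1+v$ and invoke a ``Slater-type qualification'' at $x=0$ to get $\partial\varphi(0)\ne\emptyset$. But the subdifferential you need is with respect to the pairing of $L^\infty$ with $L^1$: strict feasibility only bounds the value function above on a norm ball, giving norm-continuity and hence a subgradient in the norm dual $(L^\infty)^*$, which contains singular functionals that correspond to no martingale in $\R^1_m$. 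To land in $L^1$ you need Mackey-continuity of $v\mapsto F(0,v)$, and your $F(0,v)=\delta_{\{0\le 1+v\}}(v)$ is not finite on all of $L^\infty$, so \cite[Theorem~22]{roc74} does not apply. The paper sidesteps this by replacing the constraint with an \emph{exact penalty} $E\rho(x_{T+}+w-1)^+$, where $\rho=\sup_tz_t+\sup_t\tilde z_t+1$ comes from an integrable majorant of the reward (\cite[Remark~25, Appendix~I]{dm82}); exactness is checked by comparing $x$ with $\bar x=x\wedge 1$. Then $w\mapsto F(0,w)$ is finite, hence Mackey-continuous, on all of $L^\infty$, and \cite[Theorem~17]{roc74} delivers both the absence of a duality gap and attainment of the dual infimum in $L^1$.

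The second gap is your treatment of general class $(D)$ rewards: you propose to reduce to the right-usc case by passing to the right-usc envelope, asserting without proof that the supremum of $\hat\J$ is unchanged. This claim needs an argument, and in any case the reduction is unnecessary: with the penalty formulation, strong duality and dual attainment use no path regularity and no upper semicontinuity of $\hat\J$ at all---only the class $(D)$ property enters, through the integrability of $\rho$. That is exactly what allows the theorem to hold ``without any path properties,'' whereas your route would at best recover it for right-usc $R$ plus whatever the envelope argument can be made to yield.
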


\begin{proof}
By \cite[Remark~25, Appendix~I]{dm82}, there are measurable processes $z$ and $\tilde z$ such that $R =\op z$, $\vec R=\op{\tilde z}$ and $E[\sup_tz_t+\sup_t\tilde z_t]<\infty$. The optimum value and optimal solutions of \eqref{oqs} coincide with those of
\begin{align}\label{eq:ros}
&\maximize_{x\in\hat\N^\infty}\quad E\left[\hat \J(Dx,\tilde Dx)   - \rho(x_{T+}-1)^+\right],
\end{align}
where $\rho:=\sup_tz_t+\sup_t\tilde z_t+1$ and $\hat\J$ is defined as in Lemma~\ref{lem:usc}. Indeed, if $x$ is feasible in \eqref{eq:ros} then $\bar x:=x\wedge 1$ is feasible in \eqref{oqs} and since $x-\bar x$ is an increasing process with $(x-\bar x)_{T+}=(x_{T+}-1)^+$, we get 
\begin{align*}
\hat\J(D\bar x,\tilde D\bar x) &= \hat\J(Dx,\tilde Dx) - \hat\J(D(x-\bar x),\tilde D(x-\bar x))\\
&\ge \hat\J(Dx,\tilde Dx) - E\rho(x_{T+}-1)^+.
\end{align*}

Problem~\eqref{eq:ros} fits the general conjugate duality framework of \cite{roc74} with $U=L^\infty$, $Y=L^1$ and
\[
F(x,w) = -\hat\J(Dx,\tilde Dx) + E\rho(x_{T+}+w-1)^+.
\]
By \cite[Theorem~22]{roc74}, $w\to F(0,w)$ is continuous on $L^\infty$ in the Mackey topology that it has as the dual of $L^1$. Thus, by \cite[Theorem~17]{roc74}, the optimum value of \eqref{eq:ros} coincides with the infimum of the dual objective 
\[
g(y) :=-\inf_{x\in\hat\N^\infty} L(x,y),
\]
where $L(x,y) :=\inf_{w\in L^\infty}\{F(x,w)-Ewy\}$, and moreover, the infimum of $g$ is attained. By the interchange rule \cite[Theorem~14.60]{rw98},
\begin{align*}
L(x,y)&=
\begin{cases}
+\infty & \text{if $x\notin\hat\N^\infty_+$},\\
-\hat\J(Dx,\tilde Dx) + E\left[\inf_{u\in\reals}\{\rho(x_{T+}+u-1)^+ - uy\}\right]&\text{otherwise}\\
\end{cases}\\
&=
\begin{cases}
+\infty & \text{if $x\notin\hat\N^\infty_+$},\\
-\hat\J(Dx,\tilde Dx)+ E\left[x_{T+}y-y - \delta_{[0,\rho]}(y)\right]&\text{otherwise}.
\end{cases}
\end{align*}
We have
\[
E[x_{T+}y] = E[\int(y\one)dx+\int(y\one)d\tilde x] = \langle M,(Dx,\tilde Dx)\rangle,
\]
where $M=\op(y\one)\in\R^1_m$. Thus,
\[
L(x,y) = 
\begin{cases}
+\infty & \text{if $x\notin\hat\N^\infty_+$},\\
-\hat\J(Dx,\tilde Dx) + \langle M,(Dx,\tilde Dx)\rangle - EM_T & \text{if $x\in\hat\N^\infty_+$ and $0\le M_T\le\rho$,}\\
-\infty & \text{otherwise}.
\end{cases}
\]
The dual objective can be written as
\begin{align*}
g(y) &= 
\begin{cases}
EM_0 & \text{if $0\le M_T\le\rho$, $M\ge R$ and $M_-\ge\vec R$},\\
+\infty & \text{otherwise}.
\end{cases}
\end{align*}
Since $M$ is cadlag, $M_-\ge\vec R$ holds automatically when $M\ge R$. In summary, the optimum value of \eqref{oqs} equals that of \eqref{d}.

The dual problem of \eqref{os} is obtained similarly by defining
\[
F(x,w) = -\J(Dx) + E\rho(x_{T+}+w-1)^+.
\]
The function $w\to F(0,w)$ is again Mackey-continuous on $L^\infty$ and one finds that the dual is again \eqref{d}. Thus, the optimum value of \eqref{os} equals that of \eqref{d}.

As to the optimality conditions, \cite[Theorem~15]{roc74} says that $x$ is optimal in \eqref{eq:ros} and $y$ is optimal in the dual if and only if
\[
0\in\partial_x L(x,y),\quad 0\in\partial_y[-L](x,y).
\]
The former means that $x\in\hat\N^\infty_+$, $M\ge R$ and 
\[
\int (M-R)dx=0, \quad \int (M_--\vec R)d\tilde x=0\quad P\text{-a.s.}
\]
By the interchange rule for subdifferentials (\cite[Theorem~21c]{roc74}), the latter is equivalent to \eqref{eq:oc2}.
\end{proof}


Note that for any martingale $M\in\R^1_m$,
\[
\sup_{\tau\in\T}ER_\tau = \sup_{\tau\in\T}E(R_\tau+M_T-M_{\tau})\le E\sup_{t\in[0,T]}(R_t+M_T-M_t),
\]
where the last expression is dominated by $EM_0$ if $R\le M$. Thus,
\begin{align*}
\sup_{\tau\in\T}ER_\tau &\le \inf_{M\in\R^1_m}E\sup_{t\in[0,T]}(R_t+M_T-M_t)\\
&\le \inf_{M\in\R^1_m}\{E\sup_{t\in[0,T]}(R_t+M_T-M_t)\,|\,R\le M\}\\
&\le \inf_{M\in\R^1_m}\{EM_0\,|\,R\le M\},
\end{align*}
where, by Theorem~\ref{thm:os}, the last expression equals the first one as soon as $R$ is of class $(D)$. The optimum value of the stopping problem then equals
\[
\inf_{M\in\R^1_m}E\sup_{t\in[0,T]}(R_t+M_T-M_t).
\]
This is the dual problem derived in Davis and Karatzas~\cite{dk94} and Rogers~\cite{rog2}. Note also that if $Y$ is the Snell envelope of $R$ (the smallest supermartingale that dominates $R$), then the martingale part $M$ in the Doob--Meyer decomposition $Y=M-A$ is dual optimal. These facts were obtained in \cite{dk94} and \cite{rog2} under the assumptions that $\sup_tR_t$ is integrable. 

\bibliographystyle{plain}
\bibliography{sp}

\end{document}